\documentclass[preprint,12pt]{elsarticle}
\usepackage{amsthm,amsmath,amssymb}
\usepackage[colorlinks=true,citecolor=black,linkcolor=black,urlcolor=blue]{hyperref}
\usepackage{graphicx}
\usepackage{float}
\usepackage{mathtools}
\usepackage{epstopdf}
\usepackage{epsfig}
\usepackage{subfigure}



\theoremstyle{plain}
\newtheorem{theorem}{Theorem}
\newtheorem{lemma}[theorem]{Lemma}
\newtheorem{corollary}[theorem]{Corollary}
\newtheorem{proposition}[theorem]{Proposition}

\theoremstyle{definition}
\newtheorem{definition}[theorem]{Definition}

\newtheorem{conjecture}[theorem]{Conjecture}

\newtheorem{problem}[theorem]{Problem}

\theoremstyle{remark}
\newtheorem{remark}[theorem]{Remark}

\title{Twist monomials  of binary delta-matroids}
\author{Qi Yan\\
\small School of Mathematics\\[-0.8ex]
\small China University of Mining and Technology\\[-0.8ex]
\small P. R. China\\
Xian'an Jin\footnote{Corresponding author.}\\
\small School of Mathematical Sciences\\[-0.8ex]
\small Xiamen University\\[-0.8ex]
\small P. R. China\\
\small{\tt Email:qiyan@cumt.edu.cn; xajin@xmu.edu.cn}
}
\date{}

\begin{document}
\begin{abstract}
Recently, we introduced the twist polynomials of delta-matroids and gave a characterization of even normal binary delta-matroids whose twist polynomials have only one term and posed a problem: what would happen for odd binary delta-matroids?  In this paper,  we show that a normal binary delta-matroid whose twist polynomials have only one term if and only if each connected component of the intersection graph of the delta-matroid is either a complete graph of odd order or a  single vertex with a loop.
\end{abstract}
\begin{keyword}
delta-matroid\sep binary\sep twist\sep monomial
\vskip0.2cm
\MSC [2020] 05B35\sep 05C31
\end{keyword}
\maketitle

\section{Introduction}
The partial dual with respect to a subset $A$ of edges of a ribbon graph $G$ was introduced by Chmutov \cite{CG} in connection with the Jones-Kauffman and Bollob\'{a}s-Riordan polynomials. In 2020, Gross, Mansour and Tucker \cite{GMT} introduced the partial duality polynomial of a ribbon graph, the generating function enumerating partial duals by Euler-genus and proposed the following conjecture.
\begin{conjecture}[\cite{GMT}]\label{con1}
 There is no orientable ribbon graph having a non-constant partial duality polynomial with only one non-zero coefficient.
\end{conjecture}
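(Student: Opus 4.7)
The plan is to prove both directions by first reducing to connected intersection graphs via multiplicativity, then handling the loopless and looped subcases separately while invoking the previous paper as a black box for the loopless one. The reduction uses that the twist polynomial is multiplicative under direct sum: if $E = E_1 \sqcup E_2$ with no intersection-graph edges between the parts, then $D = D_1 \oplus D_2$, every $A \subseteq E$ splits uniquely as $A = A_1 \sqcup A_2$ with $A_i \subseteq E_i$, $D*A = (D_1*A_1)\oplus(D_2*A_2)$, and widths add, hence $\Psi_D(z) = \Psi_{D_1}(z)\Psi_{D_2}(z)$; a product of polynomials with non-negative integer coefficients is a monomial iff each factor is, so both directions reduce to the case of a connected intersection graph.

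For the ``if'' direction in the connected case, $K_n$ with $n$ odd is the main theorem of the previous paper. For a single looped vertex $v$, the representing matrix is $[1]$; the feasible sets of $D$ and $D*\{v\}$ are $\{\emptyset,\{v\}\}$ and $\{\{v\},\emptyset\}$ respectively, each of width $1$, whence $\Psi_D(z) = 2z$ is a monomial.

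For the ``only if'' direction in the connected case, the loopless subcase is delivered by the previous paper. The main new content is: \emph{a connected intersection graph containing a loop and at least one further vertex yields a non-monomial twist polynomial}. Let $v$ be a looped vertex and, by connectivity, let $u$ be a neighbour of $v$. Ordering the ground set with $v$ first, write the representing matrix as
\[
M = \begin{pmatrix} 1 & a^T \\ a & B \end{pmatrix}, \qquad a \neq 0.
\]
The principal pivot transform at $\{v\}$ produces the representing matrix of $D*\{v\}$,
\[
M' = \begin{pmatrix} 1 & a^T \\ a & B + a a^T \end{pmatrix}.
\]
Since the width of a normal binary delta-matroid equals the rank of its representing symmetric matrix over $\mathrm{GF}(2)$, one has $w(D) = 1 + \mathrm{rk}(B + a a^T)$ and $w(D*\{v\}) = 1 + \mathrm{rk}(B)$. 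If these ranks differ, the twist polynomial has at least two distinct exponents and we are done. In the ``balanced'' case $\mathrm{rk}(B+aa^T) = \mathrm{rk}(B)$, which forces $aa^T$ to lie in the column space of $B$. Here I would iterate: perform a further pivot at a well-chosen subset involving $N(v)$ and show that either the rank eventually changes, or the pivot produces a smaller or simpler configuration to which an inductive hypothesis applies. The induction bottoms out at the two-vertex base cases, with matrices $\begin{pmatrix} 1 & 1 \\ 1 & 1 \end{pmatrix}$ and $\begin{pmatrix} 1 & 1 \\ 1 & 0 \end{pmatrix}$, whose twist polynomials are both $2z + 2z^2$ by direct computation.

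The main obstacle is precisely the balanced sub-case: when a single pivot at $\{v\}$ preserves the rank, no single twist reveals non-monomiality, and one must chain together several twists (equivalently, walk in the local-equivalence class of the intersection graph) to locate one that does. A promising route is to exploit that the pivot at $\{v\}$ flips the loops throughout $N(v)$ while leaving $v$ unaffected; iterating, one can reduce either to a configuration in which enough loops are removed for the previous paper's loopless characterization to apply, or to a two-vertex minor handled by the base case. Setting up this reduction cleanly, so as to handle all admissible graph structures near a looped vertex uniformly, is the principal challenge of the new proof.
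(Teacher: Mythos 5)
The statement you were asked about is Conjecture~\ref{con1}, the Gross--Mansour--Tucker conjecture that no orientable ribbon graph has a non-constant partial duality polynomial with only one non-zero coefficient. The paper does not prove this statement; on the contrary, its introduction recalls that an infinite family of counterexamples was found in \cite{QYXJ}, so the conjecture is false and no proof of it can succeed. Your proposal never engages with ribbon graphs or partial duality polynomials at all: what you have sketched is a proof of the paper's Main Theorem, the characterization of normal binary delta-matroids with monomial twist polynomials. That mismatch is the first, and decisive, problem.

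Even read on its own terms as an attempt at the Main Theorem, the proposal has a genuine gap that you yourself flag. The reduction to connected components via multiplicativity, and the two ``if'' computations (the odd complete graph from the earlier paper, and $2z$ for a single looped vertex), are fine and match Proposition~\ref{pro 2}(2), Proposition~\ref{main-3} and the opening of Theorem~\ref{the 1}. But the entire content of the ``only if'' direction in the odd connected case is precisely the ``balanced'' situation you leave open: when the pivot at the looped vertex preserves the rank, you say you ``would iterate'' and that setting this up cleanly ``is the principal challenge,'' which is an admission that the argument is absent. The paper closes this case quite differently: it does not chase ranks of pivoted matrices, but shows (Claims 1 and 2 of Theorem~\ref{the 1}) that a monomial twist polynomial forbids the restrictions $D|_{\{e,f\}}=(\{e,f\},\{\emptyset,\{e\},\{e,f\}\})$ and $D|_{\{e,f\}}=(\{e,f\},\{\emptyset,\{e\},\{f\}\})$, using Lemmas~\ref{lem 2}, \ref{lem 6} and \ref{lem 4} on non-orientable ribbon loops, duality and contraction to manufacture a feasible set of size $w(D)+1$ in a suitable twist; since connectivity together with a loop and a second ground-set element forces one of these two restrictions to occur, $|E|=1$ follows. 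You would need an argument of comparable substance to close your balanced case before the proposal could be considered a proof of the Main Theorem, and no argument at all can establish Conjecture~\ref{con1}.
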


In \cite{QYXJ}, we found an infinite family of counterexamples to this conjecture.  Essentially, these are the only counterexamples \cite{SCFV, QYXJ2}. Chumutov and Vignes-Tourneret \cite{SCFV} also showed that it would be interesting to know whether the partial duality polynomial and the related conjectures would make sence for general delta-matroids.
In \cite{QYXJ3}, we showed that partial duality polynomials have delta-matroid analogues. We introduced the twist polynomials of delta-matroids and discussed its basic properties for delta-matroids. We gave a characterization of even normal binary delta-matroids with one term twist polynomials and posed a problem:

\begin{problem}[\cite{QYXJ3}]
What would happen for odd binary delta-matroids  with only one term twist polynomials?
\end{problem}

In this paper we answer this problem and  the main result of this paper is a characterization of normal binary delta-matroids whose twist polynomials have only one term:

~

\noindent {\bf Main Theorem.}  Let $D=(E, \mathcal{F})$ be a normal binary delta-matroid. Then $^{\partial}w_{D}(z)=mz^k$ if and only if each connected component of $G_{D}$ is either a complete graph of odd order or a  single vertex with a loop.

\section{Preliminaries}

We give a brief review of delta-matroids and related terminologies, and refer the reader to \cite{AB1, CISR, CMNR, JO} for further details.

 A \emph{set system} is a pair $D=(E, \mathcal{F})$, where $E$ or $E(D)$, is a finite set,  called the \emph{ground set}, and $\mathcal{F}$ or $\mathcal{F}(D)$, is a collection of  subsets of $E$, called \emph{feasible sets}. $D$ is \emph{proper} if $\mathcal{F}\neq \emptyset$,  is \emph{trivial} if $E=\emptyset$, and is \emph{normal} if $\mathcal{F}$ contains the empty set.  $D$ is said to be \emph{even} if the cardinality of the sets in $\mathcal{F}$ all have the same parity. Otherwise, we call $D$ \emph{odd}.
 Bouchet \cite{AB1} introduced delta-matroids as follows.

\begin{definition}[\cite{AB1}]
A \emph{delta-matroid} is a proper set system $D=(E, \mathcal{F})$ for which  satisfies the symmetric exchange axiom: for all triples $(X, Y, u)$ with $X, Y \in \mathcal{F}$ and $u\in X\Delta Y$, there is a $v\in X\Delta Y$ (possibly  $v=u$ ) such that $X\Delta \{u, v\}\in \mathcal{F}$. 
\end{definition}

Here and below $\Delta$ denotes the symmetric difference operation on pairs of sets, $$X\Delta Y:=(X\cup Y)\backslash (X\cap Y).$$ And $|A|$ denotes the cardinality of a finite set $A$.

Let $D=(E, \mathcal{F})$ be a delta-matroid. If for any $F_{1}, F_{2}\in \mathcal{F}$, we have $|F_{1}|=|F_{2}|$. Then $D$ is said to be a \emph{matroid} and we refer to $\mathcal{F}$ as its \emph{bases}. If a delta-matroid forms a matroid $M$, then we usually denote $M$ by $(E, \mathcal{B})$. We say that the \emph{rank} of $M$, written $r(M)$, is equal to $|B|$ for any $B\in\mathcal{B}(M)$.

For a delta-matroid $D=(E, \mathcal{F})$, let $\mathcal{F}_{max}(D)$ and $\mathcal{F}_{min}(D)$ be the collection of
sets in $\mathcal{F}(D)$  that have the maximum and minimum cardinality among sets in $\mathcal{F}(D)$, respectively.
Bouchet \cite{AB2} showed that $D_{max}:=(E, \mathcal{F}_{max})$ and $D_{min}:=(E, \mathcal{F}_{min})$ are both matroids. $D_{min}$ is called the \emph{lower matroid}, and $D_{max}$ is called the \emph{upper matroid}. The  \emph{width} of $D$, denote by $w(D)$, is defined by $$w(D):=r(D_{max})-r(D_{min}).$$ We observe that $w(D)=r(D_{max})$, for a normal delta-matroid $D$.

In 1987, Bouchet \cite{AB1} introduced a fundamental operation on a delta-matroid called a twist. Given a delta-matroid $D=(E, \mathcal{F})$ and a subset $A$ of $E$, the \emph{twist} of $D$ with respect to $A$, denoted by $D*A$, is given by $$(E, \{A\Delta X: X\in \mathcal{F}\}).$$ The \emph{dual} of $D$, written $D^{*}$, is equal to $D*E$. Observe that the twist of a delta-matroid is a delta-matroid \cite{AB1}.

\begin{definition}[\cite{QYXJ3}]
The  \emph{twist polynomial} of any delta-matroid $D=(E, \mathcal{F})$ is the generating function
$$^{\partial}w_{D}(z):=\sum_{A\subseteq E}z^{w(D*A)}$$
that enumerates all twists of $D$ by width.
\end{definition}

In particular, a one term twist  polynomial is called a \emph{twist monomial}. Observe that analyzing the twist polynomials of all delta-matroids are equivalent to analyzing normal delta-matroids \cite{QYXJ3}. Consequently, it suffices to consider normal delta-matroids.

\begin{definition}[\cite{CMNR}]
For delta-matroids $D=(E, \mathcal{F})$ and $\widetilde{D}=(\widetilde{E}, \widetilde{\mathcal{F}})$ with $E\cap \widetilde{E}=\emptyset$, the  \emph{direct sum} of $D$ and $\widetilde{D}$, written $D\oplus \widetilde{D}$, is the delta-matroid defined as $$D\oplus \widetilde{D}:=(E\cup \widetilde{E}, \{F\cup \widetilde{F}: F\in \mathcal{F}~\text{and}~\widetilde{F}\in \widetilde{\mathcal{F}}\}).$$
\end{definition}

A delta-matroid is  \emph{disconnected} if it can be written as $D\oplus \widetilde{D}$ for some non-trivial delta-matroids $D$ and $\widetilde{D}$, and \emph{connected} otherwise.

Let $D=(E, \mathcal{F})$  be a delta-matroid. An element $e\in E$ is a \emph{coloop} if for each $F\in \mathcal{F}$ we have $e\in F$,
and it is a \emph{loop} if for any $F\in \mathcal{F}$ we have $e\notin F$.

\begin{definition}[\cite{CMNR}]
Let $D=(E, \mathcal{F})$ be a delta-matroid. Take $e\in E$. Then

\begin{description}
  \item[(1)] $e$ is a \emph{ribbon loop} if $e$ is a loop in $D_{min}$;
  \item[(2)] A ribbon loop $e$ is \emph{non-orientable} if $e$ is a ribbon loop in $D*e$ and is \emph{orientable} otherwise.
\end{description}
\end{definition}

Let $D=(E, \mathcal{F})$ be a delta-matroid and $e\in E$. Then $D$ \emph{delete} by $e$, denoted $D\backslash e$, is defined as $D\backslash e:=(E\backslash e, \mathcal{F}')$, where
\[\mathcal{F}':=\left\{\begin{array}{ll}
\{F: F\in \mathcal{F}, F\subseteq E\backslash e\}, & \text{if $e$ is not a coloop,}\\

\{F\backslash e: F\in \mathcal{F}\}, & \text{if $e$ is a coloop}.
\end{array}\right.\]
$D$ \emph{contract} by $e$, denoted $D/ e$, is defined as $D/ e:=(E\backslash e, \mathcal{F}'')$, where
\[\mathcal{F}'':=\left\{\begin{array}{ll}
\{F\backslash e: F\in \mathcal{F}, e\in F\}, & \text{if $e$ is not a loop,}\\

\mathcal{F}, & \text{if $e$ is a loop}.
\end{array}\right.\]
Note that $D^{*}/e=(D\setminus e)^{*}$ \cite{CMNR}.

Bouchet \cite{AB1} has shown that the order in which deletions are performed does not matter.
Let $D=(E, \mathcal{F})$  be a delta-matroid and $A\subseteq E$. We define $D\setminus A$ as the result of deleting every element of $A$ in any order.
The complement of $A\subseteq E$ is $A^c := E\setminus A$.
The \emph{restriction} of $D$ to $A$, written $D|_{A}$, is the set system $D\setminus A^{c}$. Throughout the paper, we will often omit the set brackets in the case of a single element set. For example, we write $D*e$ instead of $D*\{e\}$, or $D|_{e}$ instead of $D|_{\{e\}}$.

For a finite set $E$, let $C$ be a symmetric $|E|\times|E|$ matrix over $GF(2)$, with rows and columns indexed, in the same order, by the elements of $E$. Let $C[A]$ be the principal submatrix of $C$ induced by the set $A\subseteq E$. We define the set system $D(C)=(E, \mathcal{F})$ with $$\mathcal{F}:=\{A\subseteq E: C[A] \mbox{ is non-singular}\}.$$  By convention $C[\emptyset]$ is non-singular. Bouchet \cite{AB4} showed that $D(C)$ is a normal delta-matroid. A delta-matroid is said to be \emph{binary} if it has a twist that is isomorphic to $D(C)$ for some symmetric matrix $C$ over $GF(2)$.

In particular, if $D=(E, \mathcal{F})$ is a normal binary delta-matroid, then there exists a unique symmetric $|E|\times|E|$ matrix $C$ over $GF(2)$, whose rows and columns are labelled (in the same order) by the set $E$ such that $D=D(C)$. In fact, the matrix $C$
can be constructed  as follows \cite{AB3, Moff}:
\begin{description}
\item [(1)] Set $C_{v, v}=1$ if and only if $\{v\}\in \mathcal{F}$. This determines the diagonal entries of $C$;
\item [(2)] Set $C_{u,v}=1$ if and only if $\{u\}, \{v\}\in \mathcal{F}$ but $\{u, v\}\notin \mathcal{F}$, or $\{u, v\}\in \mathcal{F}$ but $\{u\}$ and $\{v\}$ are not both in $\mathcal{F}$. Then the feasible sets of size two determine the off-diagonal entries of $C$.
\end{description}
Observe that the construction above gives a way to  define the intersection graph of any normal binary delta-matroid $D$ with respect to the unique matrix $C$ of $D$. The \emph{intersection graph} \cite{Moff} $G_{D}$ of a normal binary delta-matroid $D$ is the graph with the vertex set $E$ and in which two vertices $u$ and $v$ of $G_{D}$ are adjacent if and only if $C_{u, v}=1$ and there is a loop at $v$ if and only if $C_{v, v}=1$. Note that $D$ is connected if and only if $G_{D}$ is connected.

\section{The Proof of the Main Theorem}
\begin{proposition}[\cite{QYXJ3}]\label{pro 2}
Let $D=(E, \mathcal{F})$ and $\widetilde{D}=(\widetilde{E}, \widetilde{\mathcal{F}})$ be two delta-matroids and $A\subseteq E$. Then
\begin{description}
\item [(1)] $^{\partial}w_{D}(z)=~^{\partial}w_{D*A}(z);$
\item [(2)] $^{\partial}w_{D\oplus \widetilde{D}}(z)=~^{\partial}w_{D}(z)~^{\partial}w_{\widetilde{D}}(z).$
\end{description}
\end{proposition}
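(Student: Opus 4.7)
The plan is to prove both parts directly from the definition ${}^{\partial}w_{D}(z) = \sum_{A\subseteq E} z^{w(D*A)}$, leveraging the algebraic structure of twists (which act on feasible sets by symmetric difference) together with the additivity of width under direct sums. No deep results are needed beyond these two structural observations.

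For part (1), I would first record the composition rule $(D*A)*B = D*(A\Delta B)$ for all $A, B \subseteq E$. This is immediate from the definition of twist together with the associativity of $\Delta$, since $B\Delta(A\Delta X) = (A\Delta B)\Delta X$ for every $X \in \mathcal{F}$, so both twists produce the same collection of feasible sets. Plugging this into the defining sum gives
\[
{}^{\partial}w_{D*A}(z) = \sum_{B\subseteq E} z^{w((D*A)*B)} = \sum_{B\subseteq E} z^{w(D*(A\Delta B))}.
\]
Since $B \mapsto A\Delta B$ is an involution of the power set $2^E$, the substitution $C := A\Delta B$ is a bijective re-indexing that transforms the right-hand side into $\sum_{C\subseteq E} z^{w(D*C)} = {}^{\partial}w_{D}(z)$, proving (1).

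For part (2), the main ingredient is that twisting factors through a direct-sum decomposition. Because $E \cap \widetilde{E} = \emptyset$, every subset $A \subseteq E \cup \widetilde{E}$ splits uniquely as $A = A_1 \cup A_2$ with $A_1 \subseteq E$ and $A_2 \subseteq \widetilde{E}$. For any $F \in \mathcal{F}$ and $\widetilde{F} \in \widetilde{\mathcal{F}}$, disjointness of $E$ and $\widetilde{E}$ yields $A \Delta (F\cup \widetilde{F}) = (A_1 \Delta F) \cup (A_2\Delta \widetilde{F})$, so
\[
(D\oplus \widetilde{D})*A = (D*A_1)\oplus (\widetilde{D}*A_2).
\]
Next I would verify that width is additive under direct sum: since the feasible sets of $D_1\oplus D_2$ are precisely the disjoint unions $F_1\cup F_2$ with $F_i\in \mathcal{F}(D_i)$, their cardinalities are sums, hence $(D_1\oplus D_2)_{max} = (D_1)_{max}\oplus (D_2)_{max}$ and similarly for the minimum, giving $w(D_1\oplus D_2) = w(D_1) + w(D_2)$. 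Combining these facts, the sum over $A$ factors as
\[
{}^{\partial}w_{D\oplus \widetilde{D}}(z) = \sum_{A_1\subseteq E}\sum_{A_2\subseteq \widetilde{E}} z^{w(D*A_1) + w(\widetilde{D}*A_2)} = {}^{\partial}w_{D}(z)\cdot {}^{\partial}w_{\widetilde{D}}(z).
\]

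I do not anticipate a serious obstacle; the argument is essentially bookkeeping once one recognizes that twists compose via $\Delta$ and that direct sum is compatible with both twisting and the rank functions of the upper and lower matroids. The only subtlety worth spelling out is the additivity of width, which I would justify from the explicit identification of the upper and lower matroids of a direct sum as direct sums of the corresponding matroids of the summands.
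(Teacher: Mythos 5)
Your proof is correct, and it is the standard argument: twists compose via symmetric difference (giving a bijective re-indexing of the sum for (1)), and the direct sum is compatible with twisting and has additive width (so the sum factors for (2)). The paper itself does not reprove this proposition but merely cites it from an earlier work, so there is no in-paper proof to contrast with; your bookkeeping, including the justification of width additivity via the upper and lower matroids, is exactly what is needed.
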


\begin{lemma}[\cite{CMNR}]\label{lem 2}
Let $D=(E, \mathcal{F})$ be a delta-matroid with $r(D_{min})=r$ and suppose that $e$ is a non-orientable ribbon loop of $D$. Then a subset $F$ of $E-e$ is a basis of $D_{min}$ if and only if $F\cup e$ is a feasible set of $D$ with cardinality $r+1$.
\end{lemma}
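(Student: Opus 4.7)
The plan is to leverage the non-orientability of $e$ to establish a structural fact about feasible sets containing $e$, and then to apply Bouchet's symmetric exchange axiom in its strong (two-sided) form. First I would show that the minimum cardinality $s$ of a feasible set of $D$ containing $e$ equals exactly $r+1$. The inequality $s\geq r+1$ is immediate from $e$ being a ribbon loop of $D$, and for existence note that any minimum feasible set $G$ of $D*e$ avoids $e$ (since $e$ is also a ribbon loop of $D*e$), whence $G\cup e\in \mathcal{F}(D)$ exhibits a feasible set containing $e$. If one had $s\geq r+2$, then the minimum feasible size of $D*e$ would be $r+1$, achieved only by sets of the form $X\cup e$ with $X\in \mathcal{F}(D)$, $|X|=r$, $e\notin X$; each such set contains $e$, contradicting that $e$ is a ribbon loop of $D*e$. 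Hence $s=r+1$.

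For the forward direction I would fix $F\in \mathcal{F}_{min}(D)$ and any $H\in \mathcal{F}(D)$ with $e\in H$ and $|H|=r+1$ (which exists by the previous step), and then invoke the strong symmetric exchange axiom (a standard consequence of the weak form in the excerpt, due to Bouchet): for $X,Y\in \mathcal{F}$ and $u\in X\Delta Y$, there exists $v\in X\Delta Y$ such that both $X\Delta\{u,v\}\in \mathcal{F}$ and $Y\Delta\{u,v\}\in \mathcal{F}$. Applying this with $X=F$, $Y=H$, $u=e\in F\Delta H$, I case-split on $v$. If $v\in F\setminus H$, then $F\Delta\{e,v\}=(F-v)\cup e$ is a feasible set of size $r$ containing $e$, contradicting that $e$ is a ribbon loop of $D$. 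If $v\in (H-e)\setminus F$, then $H\Delta\{e,v\}=H\setminus\{e,v\}$ is a feasible set of size $r-1<r(D_{min})$, which is impossible. Therefore $v=e$, and $F\cup e=F\Delta\{e\}\in \mathcal{F}(D)$, which has cardinality $r+1$ since $e\notin F$.

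The reverse direction is entirely parallel: given $F\cup e\in \mathcal{F}(D)$ with $|F\cup e|=r+1$, pick any $F'\in \mathcal{F}_{min}(D)$ and apply the strong exchange to $X=F\cup e$, $Y=F'$, $u=e$; the same three-way analysis eliminates $v\in F\setminus F'$ (which would produce a feasible set of size $r-1$ via the $X$-side) and $v\in F'\setminus F$ (which would produce a feasible set of size $r$ containing $e$ via the $Y$-side), forcing $v=e$ and hence $F=(F\cup e)\Delta\{e\}\in \mathcal{F}(D)$ of size $r$. The main obstacle is that the weak axiom stated in the excerpt cannot by itself eliminate the case $v\in (H-e)\setminus F$ in the forward direction (the $X$-side would only give a harmless size-$(r+2)$ set); one really needs the simultaneous feasibility of $Y\Delta\{u,v\}$ to derive the size-$(r-1)$ contradiction. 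Passing from the one-sided to the two-sided exchange is a standard theorem for delta-matroids (Bouchet), so the invocation is legitimate.
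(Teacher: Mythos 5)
The paper never proves this lemma---it is imported from \cite{CMNR} with a citation---so your argument has to stand entirely on its own, and it does not: the ``strong (two-sided) symmetric exchange'' you invoke is not a theorem for delta-matroids. Here is a counterexample. Take $E=\{1,2,3\}$ and $\mathcal{F}=\{\emptyset,\{1\},\{1,2\},\{1,3\},\{2,3\}\}$; a direct check of all triples shows this satisfies Bouchet's one-sided symmetric exchange axiom, so it is a delta-matroid. Now let $X=\{1\}$, $Y=\{2,3\}$ and $u=2\in X\Delta Y=\{1,2,3\}$. The only $v\in X\Delta Y$ with $X\Delta\{u,v\}\in\mathcal{F}$ is $v=2$ (giving $\{1,2\}$), since $X\Delta\{2,1\}=\{2\}$ and $X\Delta\{2,3\}=\{1,2,3\}$ are infeasible; but $Y\Delta\{2\}=\{3\}\notin\mathcal{F}$. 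Hence no $v$ makes $X\Delta\{u,v\}$ and $Y\Delta\{u,v\}$ simultaneously feasible. You correctly diagnose that the one-sided axiom cannot eliminate the case $v\in(H-e)\setminus F$, but the two-sided principle you lean on to eliminate it is simply false in general; and since a delta-matroid with a non-orientable ribbon loop has feasible sets of sizes $r$ and $r+1$ and is therefore odd, no parity-restricted version of such an exchange could rescue the invocation. The same unsupported step occurs in your reverse direction, so both halves of the argument have a genuine gap.

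The salvageable part is substantial: your computation that the minimum size of a feasible set containing $e$ equals exactly $r+1$ is correct and is the right opening move. The gap can then be closed with only the one-sided axiom by exchanging \emph{out of} $H$ rather than \emph{into} $F$: fix a basis $F$ of $D_{min}$ and choose $H\in\mathcal{F}$ with $e\in H$ and $|H|=r+1$ minimizing $|F\Delta H|$. If $H\neq F\cup e$, pick $u\in H\setminus F$ with $u\neq e$ and apply the axiom to $(X,Y,u)=(H,F,u)$: the choice $v=u$ produces a feasible set of size $r$ containing $e$ (impossible, since size-$r$ feasible sets are bases of $D_{min}$ and $e$ is a loop there); a choice $v\in H\setminus F$ with $v\neq u$ produces a feasible set of size $r-1$ (impossible); and a choice $v\in F\setminus H$ produces a feasible $H'=(H-u)\cup v$ of size $r+1$ containing $e$ with $|F\Delta H'|=|F\Delta H|-2$, contradicting minimality. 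Hence $H=F\cup e$. The converse then follows by applying this argument to $D*e$, whose hypotheses are symmetric to those of $D$ since $(D*e)*e=D$ and $e$ is a ribbon loop of both.
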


\begin{lemma}\label{lem 6}
Let $D=(E, \mathcal{F})$ be a delta-matroid. If $e$ is a non-orientable ribbon loop and $f$ is a non-ribbon loop of $D$, then $f$ is a non-ribbon loop of $D/e$.
\end{lemma}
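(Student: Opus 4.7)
The plan is to compute $\mathcal{F}_{min}(D/e)$ explicitly via Lemma \ref{lem 2} and show it coincides with $\mathcal{F}_{min}(D)$; the conclusion on $f$ then follows immediately from the hypothesis that $f$ is not a ribbon loop.

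First I would check that $e$ is not a loop of $D$, so that the contraction formula $\mathcal{F}(D/e) = \{F \setminus e : F \in \mathcal{F},\ e \in F\}$ is the one that applies. Indeed, if $e$ were a loop of $D$, then every element of $\mathcal{F}(D*e)$ would contain $e$, making $e$ a coloop of $D*e$ and hence in particular not a loop of $(D*e)_{min}$; this contradicts the non-orientability hypothesis on $e$.

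Set $r = r(D_{min})$. Because $e$ is a ribbon loop, no element of $\mathcal{F}_{min}(D)$ contains $e$, so every feasible set of $D$ containing $e$ has cardinality at least $r+1$. Lemma \ref{lem 2} then identifies the feasible sets of $D$ containing $e$ of cardinality exactly $r+1$ as the sets $\{F \cup e : F \in \mathcal{F}_{min}(D)\}$. Subtracting $e$ and noting that no larger feasible set of $D/e$ can be of minimum size gives
$$\mathcal{F}_{min}(D/e) = \mathcal{F}_{min}(D).$$

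Finally, because $f$ is not a ribbon loop of $D$, there is some $F \in \mathcal{F}_{min}(D)$ with $f \in F$; and $f \neq e$ since $e$ is a ribbon loop while $f$ is not, so $f$ belongs to the ground set $E \setminus e$ of $D/e$. The displayed identity then places $F$ in $\mathcal{F}_{min}(D/e)$ with $f \in F$, so $f$ is not a ribbon loop of $D/e$. There is no real obstacle in this argument: once one sees that Lemma \ref{lem 2} pins down precisely the feasible sets responsible for the minimum cardinality after contraction, the rest is bookkeeping to confirm that the contraction formula is the one with the condition $e \in F$ and that $f$ remains an element of the contracted ground set.
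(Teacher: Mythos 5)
Your proof is correct and follows essentially the same route as the paper's: both hinge on Lemma \ref{lem 2} to show that $F\cup e$ is feasible for each $F\in\mathcal{F}_{min}(D)$, and then read off membership in $\mathcal{F}_{min}(D/e)$. You are somewhat more thorough than the paper in verifying that $e$ is not a loop (so the right contraction formula applies) and in establishing the full identity $\mathcal{F}_{min}(D/e)=\mathcal{F}_{min}(D)$ rather than just the single instance needed, but the underlying argument is the same.
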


\begin{proof}
Since $e$ is a ribbon loop of $D$, it follows that $e$ is a loop in $D_{min}$. Then for any $F\in \mathcal{F}_{min}(D)$, $e\notin F$.
Furthermore, since $f$ is a non-ribbon loop of $D$, there exists $F'\in \mathcal{F}_{min}(D)$ such that $f\in F'$ and  $e\notin F'$.
Thus $F'\cup e\in \mathcal{F}(D)$ by Lemma \ref{lem 2}. We observe that $F'\in \mathcal{F}_{min}(D/e)$. Thus $f$ is not a loop in
$(D/e)_{min}$ and hence $f$ is a non-ribbon loop of $D/e$.
\end{proof}

\begin{lemma}[\cite{QYXJ3}]\label{lem 5}
Let $D=(E, \mathcal{F})$ be a normal delta-matroid and $A\subseteq E$. Then \[w(D*A)=w(D|_{A})+w(D|_{A^{c}}).\]
\end{lemma}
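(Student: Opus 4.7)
The plan is to recast both sides of the identity as purely combinatorial optima over $\mathcal{F}$ and then connect them via the symmetric exchange axiom. Since $D$ is normal, $\emptyset\in\mathcal{F}$, which forces every element of $E$ to be a non-coloop in $D$; an easy induction over the deletions in $D|_{A}=D\setminus A^{c}$ then shows that $D|_{A}$ is itself normal with $\mathcal{F}(D|_{A})=\{F\in\mathcal{F}:F\subseteq A\}$. Because a normal delta-matroid has $r(D_{\min})=0$, this gives $w(D|_{A})=r_{A}:=\max\{|F|:F\in\mathcal{F},\,F\subseteq A\}$, and analogously $w(D|_{A^{c}})=r_{A^{c}}$. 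For the twist, writing $|A\Delta X|=|A|+s(X)$ with $s(X):=|X\cap A^{c}|-|X\cap A|$, we obtain
\[
w(D*A)=\max_{X\in\mathcal{F}}|A\Delta X|-\min_{X\in\mathcal{F}}|A\Delta X|=\max_{X\in\mathcal{F}}s(X)-\min_{X\in\mathcal{F}}s(X),
\]
so the lemma reduces to the two extremal identities $\max_{X\in\mathcal{F}}s(X)=r_{A^{c}}$ and $\min_{X\in\mathcal{F}}s(X)=-r_{A}$.

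The inequalities $\max s\ge r_{A^{c}}$ and $\min s\le -r_{A}$ are immediate by choosing $X$ to be a largest feasible set inside $A^{c}$ or $A$ respectively. For the reverse bound $\max s\le r_{A^{c}}$, I would establish a \emph{shrinking claim}: for every $X\in\mathcal{F}$ with $|X\cap A|=m>0$ there is $X'\in\mathcal{F}$ with $|X'\cap A|<m$ and $s(X')\ge s(X)$. Iterating the shrinking claim at most $m$ times yields some $X_{0}\in\mathcal{F}$ with $X_{0}\subseteq A^{c}$ and $s(X)\le s(X_{0})=|X_{0}|\le r_{A^{c}}$. To prove the shrinking claim, fix any $u\in X\cap A$ and apply the symmetric exchange axiom to the triple $(X,\emptyset,u)$; this produces $v\in X\Delta\emptyset=X$ (possibly $v=u$) such that $X':=X\Delta\{u,v\}\in\mathcal{F}$. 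A short case analysis shows that if $v=u$ or $v\in (X\cap A)\setminus\{u\}$ then $s$ strictly increases, while if $v\in X\cap A^{c}$ then removing one element from each of $X\cap A$ and $X\cap A^{c}$ leaves $s$ unchanged; in every case $|X'\cap A|<m$. A mirror argument, starting from $u\in X\cap A^{c}$ and tracking $|X\cap A^{c}|$, gives $\min s\ge -r_{A}$.

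Combining the two extremal identities yields $w(D*A)=r_{A^{c}}-(-r_{A})=w(D|_{A})+w(D|_{A^{c}})$, as required. The main obstacle is the third subcase of the shrinking step: symmetric exchange may force $v$ to lie in $A^{c}$, which a priori threatens to decrease $s$; it is precisely the cancellation between the $A$- and $A^{c}$-contributions to $s$ that keeps $s$ constant there and makes the iteration go through. Everything else — the description of $\mathcal{F}(D|_{A})$ and the rewriting of $|A\Delta X|$ — is routine bookkeeping once the shrinking claim is in hand.
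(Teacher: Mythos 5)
Your proof is correct. Note that the paper does not prove Lemma \ref{lem 5} at all --- it is imported from \cite{QYXJ3} with only a citation --- so there is no in-paper argument to compare against; your derivation is a valid self-contained substitute. The key steps all check out: normality of $D$ guarantees $\emptyset\in\mathcal{F}(D|_{A})$ so that no deletion ever hits a coloop and $\mathcal{F}(D|_{A})=\{F\in\mathcal{F}:F\subseteq A\}$, the rewriting $|A\Delta X|=|A|+s(X)$ correctly cancels the constant $|A|$ in the width, and the shrinking claim is a legitimate application of the symmetric exchange axiom to the triple $(X,\emptyset,u)$ (legal precisely because $\emptyset\in\mathcal{F}$), with the three cases for $v$ exhausting $X$ and each one both decreasing $|X\cap A|$ and not decreasing $s$. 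The mirror argument for the minimum is symmetric, and the two extremal identities combine to give $w(D*A)=r_{A}+r_{A^{c}}=w(D|_{A})+w(D|_{A^{c}})$ as claimed.
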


\begin{lemma} \label{lem 4}
Let $D=(E, \mathcal{F})$ be a normal delta-matroid, and let $e\in E$ with $w(D)=w(D*e)$.
\begin{description}
  \item[(1)] If $e$ is an orientable ribbon loop of $D$, then $e$ is a non-ribbon loop of $D^{*}$.
  \item[(2)] If $e$ is a non-orientable ribbon loop of $D$, then $e$ is a non-orientable ribbon loop of $D^{*}$.
\end{description}



\end{lemma}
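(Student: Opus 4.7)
My plan is to compare maximum feasible sizes on each side of $e$ in $D$, use the hypothesis $w(D)=w(D*e)$ to pin down the combinatorics, and then read off the status of $e$ in $D^*$ from the complementation description of the dual. Since $D$ is normal, $\mathcal{F}_{min}(D)=\{\emptyset\}$, so every element of $E$ is automatically a ribbon loop, and the orientability of $e$ reduces to a single question: if $\{e\}\in\mathcal{F}(D)$ then $\emptyset\in\mathcal{F}(D*e)$, so $D*e$ is normal and $e$ is a ribbon loop of $D*e$ (i.e., $e$ is non-orientable in $D$); if $\{e\}\notin\mathcal{F}(D)$ then $\{e\}\in\mathcal{F}_{min}(D*e)$ with min feasible cardinality $1$, showing $e$ is orientable.

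Next, partition $\mathcal{F}(D)$ into $\mathcal{F}_0=\{F\in\mathcal{F}:e\notin F\}$ and $\mathcal{F}_1=\{F\in\mathcal{F}:e\in F\}$ with $n_i=\max_{F\in \mathcal{F}_i}|F|$ and $n=w(D)=\max(n_0,n_1)$; then $\mathcal{F}(D*e)=\{F\cup e: F\in\mathcal{F}_0\}\cup\{F\setminus e: F\in\mathcal{F}_1\}$ has maximum feasible cardinality $\max(n_0+1,n_1-1)$. Imposing $w(D*e)=n$ using the correct minimum feasible cardinality in each case forces $n_0=n$ in the orientable case (min cardinality $1$, so $\max(n_0+1,n_1-1)=n+1$), and forces $n_0=n-1$ together with $n_1=n$ in the non-orientable case (min cardinality $0$, so $\max(n_0+1,n_1-1)=n$). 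The punchline is that $D^*=D*E$ has feasible sets $\{E\setminus F:F\in\mathcal{F}\}$, whence $\mathcal{F}_{min}(D^*)=\{E\setminus F:F\in \mathcal{F}_{max}(D)\}$, yielding the criterion that $e$ is a ribbon loop of $D^*$ if and only if $e\in F$ for every $F\in \mathcal{F}_{max}(D)$.

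In the orientable case $n_0=n$ supplies a maximum feasible set of $D$ avoiding $e$, so $e$ is a non-ribbon loop of $D^*$, which is part $(1)$. In the non-orientable case $n_0=n-1<n=n_1$ forces every maximum feasible set of $D$ to contain $e$, so $e$ is a ribbon loop of $D^*$; to upgrade this to non-orientability I would apply the same criterion to $D^**e=(D*e)^*$. Since $D*e$ is normal of width $n$, the maximum feasible sets of $D*e$ are exactly $\{F\cup e:F\in\mathcal{F}_0, |F|=n-1\}$ (the $F\setminus e$ contributions from $\mathcal{F}_1$ have cardinality at most $n_1-1=n-1$), and all of these contain $e$. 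Hence $e$ is a ribbon loop of $(D*e)^*=D^**e$ as well, completing part $(2)$. I expect the main obstacle to be the bookkeeping in the orientable case, where $D*e$ is not normal and the width must be computed as $r((D*e)_{max})-1$ rather than $r((D*e)_{max})$; once this subtlety is handled, the remainder is a direct reading of the definitions together with the complement description of the dual.
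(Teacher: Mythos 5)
Your proof is correct and follows essentially the same route as the paper's: the hypothesis $w(D)=w(D*e)$ is used to decide whether the maximum feasible sets of $D$ all contain $e$, and the conclusion is then read off by complementation, since $\mathcal{F}_{min}(D^*)=\{E\setminus F: F\in\mathcal{F}_{max}(D)\}$. The only cosmetic differences are that you carry out the width bookkeeping directly via the partition $\mathcal{F}_0,\mathcal{F}_1$ where the paper invokes Lemma~\ref{lem 5}, and that in part (2) you certify non-orientability by applying the max-set criterion to the normal delta-matroid $D*e$ via $(D*e)^*=D^**e$ rather than arguing inside $D^*$ and $D^**e$ directly.
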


\begin{proof}

{\bf (1)} Since $e$ is an orientable ribbon loop, we have $D|_{e}=(\{e\}, \{\emptyset\})$. Then $w(D|_{e})=0$. Note that $$w(D*e)=w(D|_{e})+w(D\backslash e)$$ by Lemma \ref{lem 5}. Since $w(D)=w(D*e)$, it follows that $w(D\backslash e)=w(D)$.
Thus $r((D\backslash e)_{max})=r(D_{max})$ and hence there exists $F\in \mathcal{F}_{max}(D)$ such that $e\notin F$. Then $F^{c}\in \mathcal{F}_{min}(D^*)$ and $e\in F^{c}$. Therefore $e$ is a non-ribbon loop of $D^{*}$.

  ~

\noindent {\bf (2)} Since $e$ is a non-orientable ribbon loop, it follows that $D|_{e}=(\{e\}, \{\emptyset, \{e\}\})$. Then $w(D|_{e})=1$ and hence $w(D\backslash e)=w(D)-1$. Thus $$r((D\backslash e)_{max})=r(D_{max})-1.$$ Then for any $X\in \mathcal{F}_{max}(D)$, $e\in X$ and there exists $Y\in \mathcal{F}(D)$ such that $|Y|=r(D_{max})-1$ and $e\notin Y$. Notice that this means that  for any $X'\in \mathcal{F}_{min}(D^*)$, $e\notin X'$ and there exists $Y'\in \mathcal{F}(D^*)$ such that $|Y'|=r({D^*}_{min})+1$ and $e\in Y$.
Thus $e$ is a ribbon loop of both $D^{*}$ and $D^{*}*e$. Hence $e$ is a non-orientable ribbon loop of $D^{*}$.
\end{proof}

\begin{theorem}\label{the 1}
Let $D=(E, \mathcal{F})$ be a connected odd normal binary delta-matroid. Then $^{\partial}w_{D}(z)=mz^k$ if and only if $D=(\{1\}, \{\emptyset, \{1\}\})$.
\end{theorem}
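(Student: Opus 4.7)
The ``if'' direction is immediate: for $D=(\{1\},\{\emptyset,\{1\}\})$, both subsets of $E$ yield twists of width $1$, so $^{\partial}w_{D}(z)=2z$.

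For ``only if'', the plan is to leverage the strong constraint $w(D*A)=k$ for every $A\subseteq E$, which by Lemma \ref{lem 5} becomes $w(D|_A)+w(D|_{A^c})=k$. Since $D$ is odd and normal binary, its representing matrix $C$ must have a nonzero diagonal entry (otherwise every non-singular principal submatrix of the zero-diagonal symmetric matrix $C$ over $GF(2)$ would have even order, forcing $D$ to be even). Pick $v\in E$ with $\{v\}\in\mathcal{F}$; since $\emptyset,\{v\}\in\mathcal{F}$, $v$ is a non-orientable ribbon loop of $D$. The twist-monomial hypothesis gives $w(D*v)=w(D)$, so Lemma \ref{lem 4}(2) delivers $\{v\}\in\mathcal{F}(D^*)$, equivalently $E\setminus v\in\mathcal{F}(D)$. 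Since $v$ is not a coloop (by normality), $E\setminus v$ also lies in $\mathcal{F}(D\setminus v)$, giving $w(D\setminus v)=|E|-1$. Applying Lemma \ref{lem 5} with $A=\{v\}$ then yields $k=w(D|_v)+w(D\setminus v)=1+(|E|-1)=|E|$.

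With $k=|E|$ secured, I would test the identity $w(D|_A)+w(D|_{A^c})=|E|$ on small subsets. For $A=\{e\}$, the two summands are bounded by $1$ and $|E|-1$ respectively, so both must saturate, forcing $\{e\}\in\mathcal{F}$ for every $e\in E$; in particular, every vertex of $G_D$ is looped. For $A=\{u,v\}$, the summands are bounded by $2$ and $|E|-2$, so again both must saturate, forcing $\{u,v\}\in\mathcal{F}$. Using $C_{u,u}=C_{v,v}=1$, the computation $\det C[\{u,v\}]=1+C_{u,v}$ over $GF(2)$ then forces $C_{u,v}=0$. Hence $G_D$ consists of looped vertices with no edges between distinct vertices, contradicting connectedness unless $|E|=1$, in which case $D=(\{1\},\{\emptyset,\{1\}\})$. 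The main obstacle is the jump from the easy lower bound $k\geq |E|-1$ (from a single looped vertex being preserved under duality) to the tight equality $k=|E|$; this is the crux, and once it is achieved by the coordinated use of Lemmas \ref{lem 4}(2) and \ref{lem 5} on the looped vertex, the remaining structure argument is routine rank arithmetic over $GF(2)$.
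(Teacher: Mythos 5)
Your ``if'' direction is fine, and so are your opening observations (an odd normal binary delta-matroid must have a looped vertex $v$, which is then a non-orientable ribbon loop with $w(D*v)=w(D)$); the endgame is also sound \emph{conditional on} $k=|E|$, since then $w(D|_A)+w(D|_{A^c})=|E|$ forces every singleton and every pair to be feasible, and non-singularity of $C[\{u,v\}]$ with $C_{u,u}=C_{v,v}=1$ indeed forces $C_{u,v}=0$. The genuine gap is exactly at the step you yourself identify as the crux: the claim that Lemma~\ref{lem 4}(2) ``delivers $\{v\}\in\mathcal{F}(D^*)$.'' Lemma~\ref{lem 4}(2) only asserts that $v$ is a non-orientable ribbon loop of $D^*$, which (unwinding the definitions and using Lemma~\ref{lem 2} applied to $D^*$) means that $v$ lies in no member of $\mathcal{F}_{min}(D^*)$ and lies in some feasible set of $D^*$ of cardinality $r((D^*)_{min})+1$. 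The equivalence ``non-orientable ribbon loop $\Leftrightarrow$ the singleton is feasible'' is valid only for \emph{normal} delta-matroids, and $D^*$ is normal precisely when $E\in\mathcal{F}(D)$, i.e.\ precisely when $k=w(D)=|E|$ --- which is the very statement you are trying to establish. So the decisive step is circular: what Lemma~\ref{lem 5} with $A=\{v\}$ actually gives is $w(D\setminus v)=k-1$, not $|E|-1$. As a sanity check that $k=|E|$ cannot follow from the monomial hypothesis alone, take $C$ to be the $3\times 3$ symmetric matrix over $GF(2)$ with zero diagonal and all off-diagonal entries equal to $1$: then $G_D=K_3$, $^{\partial}w_{D}(z)=8z^{2}$ is a monomial, yet $k=2<3=|E|$. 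This example is even rather than odd, but it shows that your deduction of $k=|E|$ must use oddness in an essential way, and as written it does not.

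The paper sidesteps this entirely by arguing locally rather than globally: it proves that no two-element restriction of $D$ can equal $(\{e,f\},\{\emptyset,\{e\},\{e,f\}\})$ or $(\{e,f\},\{\emptyset,\{e\},\{f\}\})$ (its Claims 1 and 2, which need Lemmas~\ref{lem 2}, \ref{lem 4} and \ref{lem 6} plus a careful analysis of $\mathcal{F}_{max}(D\setminus e)$ to manufacture a twist of width $w(D)+1$), and then observes that a looped vertex with a neighbour in $G_D$ forces one of these two restrictions, so connectedness gives $|E|=1$. If you wish to salvage your global strategy, you would need an independent proof that $E\in\mathcal{F}(D)$ (equivalently that $C$ is non-singular) under the stated hypotheses; no such argument is currently in your write-up.
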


\begin{proof}
If $D=(\{1\}, \{\emptyset, \{1\}\})$, then $^{\partial}w_{D}(z)=2z$. Thus the sufficiency is verified. For necessity, we claim that $|E|=1$.
Suppose not. Then we consider two claims as follows.

\begin{description}
  \item[Claim 1.] For any $e, f\in E$, $D|_{\{e, f\}}\neq (\{e, f\}, \{\varnothing, \{e\}, \{e, f\}\})$.

Suppose that Claim 1 is not true. Then  there exist $e, f\in E$ such that $$D|_{\{e, f\}}= (\{e, f\}, \{\varnothing, \{e\}, \{e, f\}\}).$$ It is easy to verify that $e$ is a non-orientable ribbon loop and $f$ is  an orientable ribbon loop of $D$.
Since $^{\partial}w_{D}(z)=mz^k$, it follows that $$w(D)=w(D*e)=w(D*f).$$ Then $e$ is a non-orientable ribbon loop and $f$ is  a non-ribbon loop of $D^{*}$ by Lemma \ref{lem 4}. Thus $f$ is a non-ribbon loop of $D^{*}/e$ by Lemma \ref{lem 6}. Note that $D^{*}/e=(D\setminus e)^{*}$ and hence $f$  is a non-ribbon loop of  $(D\setminus e)^{*}$. Then there exists $F\in \mathcal{F}_{min}((D\setminus e)^{*})$ such that $f\in F$. Let $F':=(E\backslash e)\backslash F$,  then $F'\in \mathcal{F}_{max}(D\setminus e)$ and $e, f\notin F'$. Therefore $F'\in \mathcal{F}(D)$ by the definition of $D\backslash e$. For any $X\in \mathcal{F}_{max}(D)$, we observe that $e\in X$.  Otherwise, $$r(D*e_{max})=r(D_{max})+1.$$ Since $\{e\}\in \mathcal{F}(D)$, we have $\emptyset\in \mathcal{F}(D*e)$, that is, $r(D*e_{min})=0$. Then $w(D*e)=w(D)+1$, this contradicts $^{\partial}w_{D}(z)=mz^k$.  Thus $$r((D\backslash e)_{max})\leq r(D_{max})-1.$$ Furthermore, we observe that there exists $Y\in \mathcal{F}(D)$ such that $e\notin Y$ and $|Y|=r(D_{max})-1$. Otherwise, $$r(D*e_{max})=r(D_{max})-1.$$ Then $w(D*e)=w(D)-1$, this also contradicts $^{\partial}w_{D}(z)=mz^k$. Thus $Y\in D\backslash e$ and hence $$r((D\backslash e)_{max})=r(D_{max})-1.$$ We obtain $|F'|=r(D_{max})-1$. Since $\emptyset, F'\cup \{e, f\}\in \mathcal{F}(D*\{e, f\})$, it follows that $$w(D*\{e, f\})\geq |F'\cup \{e, f\}|=r(D_{max})+1=w(D)+1,$$ this contradicts $^{\partial}w_{D}(z)=mz^k$. Hence Claim 1 is proved.

  \item[Claim 2.]  For any $e, f\in E$, $D|_{\{e, f\}}\neq (\{e, f\}, \{\varnothing, \{e\}, \{f\}\})$ .

Suppose that Claim 2 is not true. Then  there exist $e, f\in E$ such that $$D|_{\{e, f\}}= (\{e, f\}, \{\varnothing, \{e\}, \{f\}\}).$$ It is easily seen that $$D*e|_{\{e, f\}}=(\{e, f\}, \{\varnothing, \{e\}, \{e, f\}\}).$$ Note that
$^{\partial}w_{D*e}(z)=~^{\partial}w_{D}(z)=mz^k$ by Proposition \ref{pro 2} (1), this contradicts Claim 1. Then Claim 2 follows.
\end{description}

Since $D$ is an odd normal binary delta-matroid, we know that $D=D(C)$ for some symmetric matrix $C$ over $GF(2)$ and there exists a non-orientable ribbon loop $e$. As $D$ is connected and $|E|\geq 2$, there exists $f\in E$ such that
\[C[\{e, f\}] =
\bordermatrix{
& e & f   \cr
e & 1   & 1     \cr
f  & 1   & 0     \cr
}\]
 or
 \[C[\{e, f\}] =
\bordermatrix{
& e & f   \cr
e & 1   & 1     \cr
f  & 1   & 1     \cr
}.\]
Then $$D|_{\{e, f\}}= (\{e, f\}, \{\varnothing, \{e\}, \{e, f\}\})$$ or $$D|_{\{e, f\}}= (\{e, f\}, \{\varnothing, \{e\}, \{f\}\}),$$ this contradicts Claim 1 or 2. Thus $|E|=1$. Since $D$ is an odd delta-matroid, it follows that $D=(\{1\}, \{\emptyset, \{1\}\})$.
\end{proof}

\begin{corollary}\label{cor 1}
Let $D=(E, \mathcal{F})$ be a connected odd normal binary delta-matroid. Then $^{\partial}w_{D}(z)=mz^k$ if and only if $G_{D}$ is a single vertex with a loop.
\end{corollary}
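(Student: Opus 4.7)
The plan is to derive the corollary as an essentially cosmetic translation of Theorem~\ref{the 1}, using the uniqueness of the symmetric matrix $C$ associated to a normal binary delta-matroid and the definition of the intersection graph.

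First, I would record the trivial direction. If $D=(\{1\},\{\emptyset,\{1\}\})$, then $\{1\}\in\mathcal{F}$, so by the construction of $C$ we have $C_{1,1}=1$. Hence $G_D$ has a single vertex carrying a loop. Conversely, suppose $G_D$ consists of a single vertex $v$ with a loop. Then $E=\{v\}$ and the unique symmetric matrix $C$ satisfies $C_{v,v}=1$, which by the construction rules forces $\{v\}\in\mathcal{F}$. Since $D$ is normal, $\emptyset\in\mathcal{F}$; because $E$ has only the subsets $\emptyset$ and $\{v\}$, we conclude $\mathcal{F}=\{\emptyset,\{v\}\}$, so up to relabelling $D=(\{1\},\{\emptyset,\{1\}\})$.

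Now I would glue this to Theorem~\ref{the 1}. Assume $D$ is a connected odd normal binary delta-matroid with $^{\partial}w_D(z)=mz^k$. By Theorem~\ref{the 1}, $D=(\{1\},\{\emptyset,\{1\}\})$, and the computation of the previous paragraph then gives that $G_D$ is a single vertex with a loop. For the other direction, if $G_D$ is a single vertex with a loop, the paragraph above shows $D=(\{1\},\{\emptyset,\{1\}\})$; this delta-matroid is odd and a direct computation gives $^{\partial}w_D(z)=2z$, which is a monomial.

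The only point that really requires any care is the correspondence between $G_D$ and $D$ itself, and this is handled by recalling that for a normal binary delta-matroid the matrix $C$ with $D=D(C)$ is \emph{unique}, so $G_D$ determines $D$ up to isomorphism. I do not anticipate any genuine obstacle here: Theorem~\ref{the 1} does all the heavy lifting, and the corollary is a one-line reformulation using the dictionary between the matrix $C$, the feasible sets of size at most two, and the graph $G_D$.
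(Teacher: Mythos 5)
Your proposal is correct and follows exactly the paper's route: the corollary is deduced directly from Theorem~\ref{the 1} via the observation that the intersection graph of $(\{1\},\{\emptyset,\{1\}\})$ is a single vertex with a loop. Your extra care in checking the converse dictionary (that a one-vertex looped $G_D$ forces $\mathcal{F}=\{\emptyset,\{v\}\}$, using normality and the construction of $C$) is a harmless elaboration of a step the paper treats as immediate.
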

\begin{proof}
Since the intersection graph of $D=(\{1\}, \{\emptyset, \{1\}\})$ is a single vertex with a loop,  the result follows immediately from Theorem \ref{the 1}.
\end{proof}

\begin{proposition}[\cite{QYXJ3}]\label{main-3}
Let $D=(E, \mathcal{F})$ be a connected even normal binary delta-matroid. Then $^{\partial}w_{D}(z)=mz^k$ if and only if $G_{D}$ is a complete graph of odd order.
\end{proposition}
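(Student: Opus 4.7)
The plan is to translate the twist-monomial condition into a rank identity for the matrix $C$ representing $D = D(C)$, verify sufficiency by a direct linear-algebra computation on $J+I$, and obtain necessity by a structural argument modeled on Theorem \ref{the 1}.

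First I use that for a normal binary delta-matroid $D = D(C)$ every restriction stays normal, so $w(D|_{A}) = \mathrm{rank}(C[A])$ for all $A \subseteq E$, and in particular $w(D) = \mathrm{rank}(C)$. Combined with Lemma \ref{lem 5}, the condition $^{\partial}w_{D}(z) = mz^{k}$ becomes
\begin{equation*}
\mathrm{rank}(C[A]) + \mathrm{rank}(C[A^{c}]) = \mathrm{rank}(C) \quad \text{for every } A \subseteq E.
\end{equation*}
Because $D$ is even, $\{v\}\notin\mathcal{F}$ for every $v$, so the diagonal of $C$ is zero and $G_{D}$ is the simple graph with adjacency matrix $C$.

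For sufficiency, let $G_{D} = K_{n}$ with $n$ odd, so $C = J + I$ over $GF(2)$. The equation $(J+I)v = 0$ forces $v = (\mathbf{1}^{\top}v)\mathbf{1}$, and a parity check gives $\ker(J+I) = \mathrm{span}(\mathbf{1})$ precisely when $n$ is odd, so $\mathrm{rank}(C) = n - 1$. The same calculation on $C[A] = J_{|A|} + I_{|A|}$ yields rank $|A|$ when $|A|$ is even and rank $|A|-1$ when $|A|$ is odd. Since $n$ is odd, exactly one of $|A|$ and $n-|A|$ is odd, so the two ranks sum to $n - 1 = \mathrm{rank}(C)$.

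For necessity, I argue by contrapositive in two steps. I first observe that in a normal even binary delta-matroid $\mathcal{F}_{min}(D) = \{\emptyset\}$, so every element is a ribbon loop of $D$, and moreover orientable because $\{e\}$ lies in $\mathcal{F}_{min}(D*e)$; Lemma \ref{lem 4}(1) is therefore always applicable. Step 1 forces $G_{D}$ to be complete: if some two vertices of $G_{D}$ are nonadjacent, then connectedness gives a shortest path of length at least two, hence an induced $P_{3}$ on three vertices $x,y,z$; the restriction $D|_{\{x,y,z\}}$ is the delta-matroid of $P_{3}$, whose rank identity at $A = \{x,z\}$ already fails. To propagate this local failure to the full $D$ I would induct on $|E|$, using Lemma \ref{lem 6} to transfer non-ribbon loops through contractions, Proposition \ref{pro 2} to preserve the monomial hypothesis under direct-sum decomposition, and the Claim-style scheme in the proof of Theorem \ref{the 1} to convert forbidden two-element restrictions into explicit width mismatches. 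Step 2, once $G_{D} = K_{n}$, evaluates the rank identity at a single-vertex $A = \{v\}$: this reads $0 + \mathrm{rank}(J_{n-1} + I_{n-1}) = \mathrm{rank}(J_{n} + I_{n})$, which is valid iff $n$ is odd.

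The main obstacle is step 1 of necessity: a local rank drop inside a $P_{3}$ restriction does not by itself propagate to the full matrix $C$, since extra rows and columns could in principle absorb the deficit. Overcoming this requires either a global choice of $A$ tailored to the full matrix or the inductive descent outlined above, in which the preservation of the monomial hypothesis on connected minors is the crucial verification.
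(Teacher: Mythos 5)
This proposition is imported from \cite{QYXJ3} and the paper gives no proof of it, so there is nothing in-paper to compare against; judging your argument on its own terms, it is incomplete. The reduction of the monomial condition to the rank identity $\mathrm{rank}(C[A])+\mathrm{rank}(C[A^{c}])=\mathrm{rank}(C)$ is sound (granting the standard fact that the $GF(2)$-rank of a symmetric matrix is attained on a principal submatrix, so $w(D|_{A})=\mathrm{rank}(C[A])$), the sufficiency computation with $J+I$ is correct, and so is the odd-order step of necessity. The genuine gap is exactly the one you flag yourself: completeness of $G_{D}$. A failure of the rank identity for the three-element restriction $D|_{\{x,y,z\}}$ is a statement about $\mathrm{rank}(C[\{x,z\}])+\mathrm{rank}(C[\{y\}])$ versus $\mathrm{rank}(C[\{x,y,z\}])$; the hypothesis on $D$ only constrains sums of the form $\mathrm{rank}(C[A])+\mathrm{rank}(C[A^{c}])$ with $A^{c}$ taken in all of $E$, and nothing you have written rules out the extra vertices absorbing the local deficit. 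Acknowledging an obstacle is not the same as overcoming it, and the inductive descent you gesture at is not carried out: in particular, you never establish that the monomial hypothesis passes to $D\setminus e$, to restrictions, or to any connected minor, and Proposition \ref{pro 2} only covers twists and direct sums.

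Moreover, the specific tools you propose for the descent do not apply here. Lemma \ref{lem 6} and the Claim scheme in the proof of Theorem \ref{the 1} are built around non-orientable ribbon loops, i.e.\ elements $e$ with $\{e\}\in\mathcal{F}$; in an even normal delta-matroid no singleton is feasible, so (as you yourself observe) every element is an orientable ribbon loop and those lemmas have no purchase. What is needed is a genuinely global argument --- for instance, deriving from the rank identity applied to singletons, edges, non-edges and the triple $\{x,y,z\}$ of an induced path a contradiction in the full matrix $C$, or a lemma showing the rank identity is inherited by principal submatrices --- and that is precisely the part of the proof that is missing.
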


\begin{remark}
The proof of the Main Theorem is straightforward by Propositions \ref{pro 2} (2), \ref{main-3} and Corollary \ref{cor 1}.
\end{remark}

\section*{Acknowledgements}
This work is supported by NSFC (Nos. 12171402, 12101600) and the Fundamental Research Funds for the Central Universities (Nos. 20720190062, 2021QN1037).



\begin{thebibliography}{9}
\bibitem{AB1} A. Bouchet, Greedy algorithm and symmetric matroids,
\emph{Math. Program.} \textbf{38} (1987) 147--159.

\bibitem{AB4}  A. Bouchet, Representability of $\Delta$-matroids, \emph{Colloq. Math. Soc. J$\acute{a}$nos Bolyai} (1987) 167--182.

\bibitem{AB2}  A. Bouchet, Maps and delta-matroids, \emph{Discrete Math.} \textbf{78} (1989) 59--71.

\bibitem{AB3}  A. Bouchet and A. Duchamp, Representability of delta-matroids over $GF(2)$, \emph{Linear Algebra Appl.} \textbf{146} (1991) 67--78.

\bibitem{CG} S. Chmutov, Generalized duality for graphs on surfaces and the signed Bollob\'as-Riordan polynomial,
\emph{J.  Combin. Theory Ser. B} \textbf{99} (2009) 617--638.

\bibitem{SCFV}  S. Chmutov and F. Vignes-Tourneret, On a conjecture of Gross, Mansour and Tucker, \emph{European J. Combin. } \textbf{97} (2021) 103368.

\bibitem{CISR} C. Chun, I. Moffatt, S. D. Noble and R. Rueckriemen, On the interplay between embedded graphs and delta-matroids, \emph{Proc. London Math. Soc.} \textbf{118} (2019) 3: 675--700.

\bibitem{CMNR} C. Chun, I. Moffatt, S. D. Noble and R. Rueckriemen, Matroids, delta-matroids and embedded graphs, \emph{J. Combin. Theory Ser. A} \textbf{167} (2019) 7--59.

\bibitem{GMT}  J. L. Gross, T. Mansour and T. W. Tucker, Partial duality for ribbon graphs, I: Distributions, \emph{European J. Combin. } \textbf{86} (2020) 103084.

\bibitem{Moff} I. Moffatt, Delta-matroids for graph theorists, in: Surveys in Combinatorics, 2019, in: London Math.
Soc. Lecture Note Ser., vol. 456, Cambridge Univ. Press, Cambridge, 2019, pp. 167--220.

\bibitem{JO} J. Oxley, Matroid theory, 2nd edn, Oxford University Press, New York, 2011.

\bibitem{QYXJ} Q. Yan and X. Jin, Counterexamples to a conjecture by Gross, Mansour and Tucker on partial-dual genus polynomials of ribbon graphs, \emph{European J. Combin. } \textbf{93} (2021) 103285.

\bibitem{QYXJ3}  Q. Yan and X. Jin, Twist polynomials of delta-matroids, \emph{Adv. in Appl. Math. } \textbf{139} (2022) 102363.

\bibitem{QYXJ2}  Q. Yan and X. Jin, Partial-dual genus polynomials and signed intersection graphs, Preprint arXiv:math.CO/2102.01823.
\end{thebibliography}
\end{document}